\newtheorem{thm}{Theorem}[section]
\newtheorem{lem}[thm]{Lemma}
\newtheorem{defn}[thm]{Definition}
\newtheorem{conj}[thm]{Conjecture}
\numberwithin{equation}{section}
\theoremstyle{remark}
\newtheorem{rem}[thm]{Remark}
\newenvironment{acknowledge}{\bigskip\noindent\textbf{Acknowledgments.}}{}
\long\def\blankfootnotetext#1{\begingroup\def\thefootnote{\fnsymbol{footnote}}\footnotetext{#1}\endgroup}
\newcommand{\dtemp}{d_\mathrm{b}}
\newcommand{\N}{\mathbb{\Z}_{>0}}
\newcommand{\Z}{\mathbb{Z}}
\newcommand{\Q}{\mathbb{Q}}
\newcommand{\F}{\mathbb{F}}
\newcommand{\Proj}{\mathbb{P}}
\newcommand{\abs}[1]{\left\vert{#1}\right\vert}
\newcommand{\Hom}[1]{\mathrm{Hom}\left({#1}\right)}
\newcommand{\sconv}[1]{\mathrm{conv}\!\left\{{#1}\right\}}
\newcommand{\ip}[1]{\left<#1\right>}
\newcommand{\V}[1]{\mathrm{vert}\!\left({#1}\right)}
\newcommand{\conv}[1]{\mathrm{conv}\!\left({#1}\right)}
\newcommand{\Vol}[1]{\mathrm{Vol}\!\left({#1}\right)}
\newcommand{\widthu}[2]{\mathrm{wd}_{#1}\left({#2}\right)}
\newcommand{\magma}{{\sc Magma}}
\newcommand{\padcol}[1]{\phantom{x}${#1}$\phantom{x}}
\begin{document}
\author[G.~Brown]{Gavin Brown}
\address{Department of Mathematical Sciences\\Loughborough University\\Loughborough\\LE11 3TU\\United Kingdom}
\email{G.D.Brown@lboro.ac.uk}
\author[A.~M.~Kasprzyk]{Alexander M.~Kasprzyk}
\address{Department of Mathematics\\Imperial College London\\London\\SW7 2AZ\\United Kingdom}
\email{a.m.kasprzyk@imperial.ac.uk}
\blankfootnotetext{2010 {\em Mathematics Subject Classification}: 14G50 (Primary); 52B20, 14M25 (Secondary).}
\title{Small polygons and toric codes}
\begin{abstract}
We describe two different approaches to making systematic classifications of plane lattice polygons, and recover the toric codes they generate, over small fields, where these match or exceed the best known minimum distance. This includes a $[36,19,12]$-code over $\F_7$ whose minimum distance $12$ exceeds that of all previously known codes.
\end{abstract}
\maketitle
\section{Introduction}\label{sec:introduction}
We are interested in planar lattice polygons $P$ -- not least for their own sake -- and, for a finite field $\F_q$, the toric codes $C_P(\F_q)$ that they determine. We recall the construction of such codes, following Hansen~\cite{Han00,Han12}, in Section~\ref{sec:codes} below. Briefly, $P$ and $q$ together determine a $k\times n$ integer matrix -- the toric code -- for integers $k$, the {\em dimension}, and $n$, the {\em block length}. The important invariant of a code $C$ is its {\em minimum distance} $d$, the shortest Hamming distance between any two distinct points of the image of the matrix -- that is, the distance between points of the lattice generated by the rows of the matrix. It is this measure which limits the ability of the code to detect and correct errors.

The collection of all toric codes includes some champions among linear codes, in the sense that they have minimum distance greater than that of any other known code with equal block length and dimension. This is exemplified by the paper of Joyner~\cite{Joy04} and the recent paper of Little~\cite{Lit11}, the latter describing a generalised champion $[49,12,28]$-code over $\F_8$ (we follow the standard notation and denote the invariants of a code by a triple $[n,k,d]$). We find a new champion, a $[36,19,12]$-code over $\F_7$, however the novel part of our paper is the systematic approach to classification of polygons that we use to discover this example. This approach exploits database methods in collaboration with computer-aided polygon computations in a way that seems not to have been pursued in this area. As a spin-off, we classify a certain set of `small' lattice polygons, as we now explain.

One of the key points when considering toric codes over $\F_q$ is to restrict attention to polygons $P$ that lie in a square of side-length $m\le q-2$. In this paper we concentrate on fields $\F_q$ with $q\le9$ a prime power, and so we first enumerate all polygons that lie in a square of side-length $m\le 7$. We regard two polygons $P$ and $P'$ as equivalent if there exists a point $u\in\Z^2$ and change of basis $\varphi$ of the underlying lattice such that $\varphi(P-u)=P'$. Table~\ref{tab:summary} summarises our results by listing the number of integral polygons (up to equivalence) that lie in the square $[0,m] \times [0,m]$, but not in a smaller square. It also lists the maximum number of vertices that such a polygon can have, and the number of polygons that achieve this upper bound. The complete list of polygons is available on the webpage~\cite{GRDb}.

The systematic approach described above would certainly exhaust all toric codes over small fields (even the generalised codes of~\cite{Rua09,Lit11}), but it obviously becomes less practical as $m$ increases. It is remarkable to us that the number of polygons grows as slowly as it does, although this is presumably an artefact of two dimensions. We seek champion codes, that is, codes exceeding the current largest known minimum distances. Computing the minimum distance is very often too hard (it is certainly computable, but in many cases we have reliable estimates of computation time of the order of a year), but determining whether or not a given code is a champion is much easier: in most cases, simple linear combinations of the rows of the corresponding matrix provide sufficiently short vectors to rule it out. We carry out this calculation for each one of these polygon codes for every applicable field of size at most nine. In this early part of the polygonal classification, there is exactly one champion -- the code mentioned above -- as well as many examples that match the current best known minimum distance. All but 257 of the polygonal codes have a short vector that is a combination of at most three rows of the generator matrix; only six of these, other than the champion, require more than four rows.

\begin{table}[htdp]
\caption{The number of polygons (up to equivalence) contained in the square $[0,m]\times[0,m]$, but not contained in a smaller square.}
\centering
\begin{tabular}{|r|ccccccc|}\hline
$m$&\padcol{1}&\padcol{2}&\padcol{3}&\padcol{4}&\padcol{5}&\padcol{6}&\padcol{7}\\\hline
\#Polygons&$2$&$15$&$131$&$1369$&$13842$&$129185$&$1104895$\\
Max vertices&$4$&$6$&$8$&$9$&$10$&$12$&$13$\\
\#Max polygons&$1$&$1$&$1$&$1$&$15$&$2$&$3$\\\hline
\end{tabular}
\label{tab:summary}
\end{table}

The champion polygon and many of the equal champions are LDP-polygons (lattice polygons associated to log del Pezzo surfaces, explained in detail in Section~\ref{sec:ldp}). Motivated by this, we go on to consider codes associated to toric log del Pezzo surfaces, using the classification of all 15,346 LDP-polygons of index $\ell\le 17$~\cite{KKN08}. The difficulty here is that these polygons are not presented as lying in the smallest possible square, so we describe an algorithm that determines the smallest square that contains a polygon equivalent to a given polygon. This algorithm -- an adaptation of the minimum width algorithm -- is the main point, since our search through the resulting data for toric log del Pezzo codes over the smallest possible field $\F_q$ reveals no further champions.

\section{Toric codes}\label{sec:codes}
Let $M\cong\Z^2$ be a two-dimensional lattice, and let $P\subset M\otimes_\Z\Q$ be a convex lattice polygon with vertices in $M$. Let $\F_q$ be a finite field such that, up to translation, $P$ is contained in the square $\sconv{(0,0),(0,q-2),(q-2,0),(q-2,q-2)}$. Hansen demonstrated in~\cite{Han00,Han12} how one can associate a linear code to $P$. Let $\varepsilon\in\F_q$ be a primitive element of the field, and for each lattice point $u:=(u_1,u_2)\in P\cap M$ define:
$$\begin{array}{r@{\ }c@{\ }l}
e(u):\F_q^*\times\F_q^*&\rightarrow&\F_q\\
(\varepsilon^i,\varepsilon^j)&\mapsto&(\varepsilon^i)^{u_1}(\varepsilon^j)^{u_2}.
\end{array}$$
The set of vectors
$$\{(e(u)(\varepsilon^i,\varepsilon^j))_{0\le i,j\le q-2}\mid u\in P\cap M\}$$
then generates a linear code of block length $n=(q-1)^2$ and dimension $k=\abs{P\cap M}$, denoted by $C_P(\F_q)$.

We sketch the connection with toric geometry. For the details see~\cite{Han00,Han02,Joy04}. Associated with any polygon $P$ is a non-singular fan $\Delta$ in $N:=\Hom{M,\Z}$ given by the following construction. Let $h_P:N_\Q\rightarrow\Q$ be the support function defined by
$$h_P(v):=\inf\{\ip{u,v}\mid u\in P\}.$$
This is a piecewise linear function, and partitions $N_\Q$ into a finite collection of strictly convex polyhedral cones; in other words, $h_P$ defines a fan $\Delta'$ in $N_\Q$. We can refine $\Delta'$ to a non-singular fan $\Delta$ by inserting rays via a well-known process (see, for example,~\cite{Oda78}). Notice that, for any cone $\sigma\in\Delta$, there exists a lattice point $l_\sigma\in M$ such that
$$h_P(v)=\ip{l_\sigma,v},\qquad\text{ for all }v\in\sigma.$$

Let $X_P$ be the complete smooth toric surface associated with $\Delta$, where the algebraic torus $T_N$ is defined by $T_N:=\Hom{M,\overline{\F}_q^*}$. The polygon $P$ is associated with the Cartier divisor
$$D_h:=-\!\!\sum_{\text{Rays $\rho$ of $\Delta$}}\!\!\ip{l_\rho,\rho}\cdot\V{\rho},$$
where $\V{\rho}\cong\Proj^1$ is the closure of the $T_N$ orbit of $\rho$ in $X_P$ (again, see~\cite{Oda78} for the details) as follows. The space of global sections $H^0(X_P,\mathcal{O}_{X_P}(D_h))$ has dimension $\abs{P\cap M}$ with basis $\{e(u)\mid u\in P\cap M\}$. Thus the code $C_P(\F_q)$ can be obtained via:
\begin{equation}\label{eq:eval}
\begin{array}{r@{\ }c@{\ }l}
H^0(X_P,\mathcal{O}_{X_P}(D_h))^\mathrm{Frob}&\rightarrow&C_P(\F_q)\\
f&\mapsto&(f(t))_{t\in T_N}.
\end{array}
\end{equation}
The generators of the code are given by the image of the basis; i.e.
$$\{(e(u)(t))_{t\in T_N}\mid u\in P\cap M\}.$$
The toric codes $C_P(\F_q)$ obtained via translation of $P$, or more generally by any affine linear isomorphism of $M\cong\Z^2$, are monomially equivalent~\cite[Theorem~4]{LS07}, hence it is sufficient to consider $P$ up to equivalence. Following~\cite{Rua09,Lit11} one can restrict the evaluation map \eqref{eq:eval} to a subspace of the Riemann--Roch space to construct \emph{generalised} toric codes.

\section{Classifying small polygons}\label{sec:smallpolygons}
\begin{defn}
Let $P\subset M_\Q$ be a lattice polytope, and let $v\in\V{P}$ be a vertex of $P$. If the polytope $P_v:=\conv{(P\cap M)\setminus\{v\}}$ satisfies $\dim{P_v}=\dim{P}$, then $P_v$ is said to have been obtained from $P$ by \emph{shaving}. Given a polytope $Q$ we say that $Q$ can be obtained from $P$ via \emph{successive shaving} if there exists a sequence of shavings $Q=P^{(0)}\subset\cdots\subset P^{(n)}=P$, where $P^{(i-1)}=P^{(i)}_{v_i}$ for some $v_i\in\V{P^{(i)}}$, $0<i\le n$.
\end{defn}

\begin{lem}
Any lattice polygon $Q$ in the box $B_m:=[0,m]\times[0,m]$ can be obtained from $B_m$ via successive shaving.
\end{lem}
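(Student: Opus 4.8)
The plan is to start from $B_m$ and repeatedly \emph{shave away a vertex that lies outside $Q$}, maintaining at every stage a lattice polygon $P$ with $Q\subseteq P\subseteq B_m$ and $P=\conv{P\cap M}$; since each such step strictly decreases $\abs{P\cap M}$, after finitely many steps the process must reach $Q$ itself. We may assume $\dim Q=2$, since otherwise $Q$ is not a polygon, and we argue by induction on $\abs{P\cap M}$, the base case being $P=Q$ (the empty sequence of shavings).

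The first key step is the claim: if $Q\subsetneq P$ then some vertex of $P$ lies outside $Q$. Indeed, were $\V{P}\subseteq Q$, we would get $P=\conv{\V{P}}\subseteq Q$, contradicting $Q\subsetneq P$. So fix a vertex $v\in\V{P}\setminus Q$ and put $P_v:=\conv{(P\cap M)\setminus\{v\}}$. Because $v\notin Q$ we have $Q\cap M\subseteq(P\cap M)\setminus\{v\}$, hence $Q=\conv{Q\cap M}\subseteq P_v$; in particular $\dim P_v\ge\dim Q=2=\dim P$, so $P_v$ is genuinely obtained from $P$ by shaving. Moreover, since $v$ is a vertex of $P$ it is not a convex combination of the remaining lattice points of $P$, so $v\notin P_v$; combined with $P_v\subseteq P$ this gives $P_v\cap M=(P\cap M)\setminus\{v\}$, whence $P_v\subsetneq P$, $\abs{P_v\cap M}=\abs{P\cap M}-1$, and $P_v=\conv{P_v\cap M}$. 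Thus $P_v$ again satisfies the inductive hypotheses, with $Q\subseteq P_v\subsetneq P\subseteq B_m$ and strictly fewer lattice points, so by induction $Q$ is obtained from $P_v$, and hence from $P$, by successive shaving. Taking $P=B_m$ proves the lemma.

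The argument is elementary, and the only points needing care are that the prescribed move really is an admissible shaving — namely that deleting $v$ neither drops the dimension nor removes any lattice point of $Q$, both immediate from the inclusion $Q\subseteq P_v$ — and that $v$ genuinely leaves the lattice-point set, so that $\abs{P\cap M}$ strictly decreases and the procedure terminates at $Q$; this last fact uses crucially that $v$ is chosen to be a \emph{vertex} of $P$ rather than an arbitrary boundary or interior lattice point. Everything else is just chasing the chain of inclusions $Q\subseteq P_v\subseteq P$.
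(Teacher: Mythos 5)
Your proposal is correct and follows essentially the same greedy strategy as the paper: repeatedly shave a vertex of the current polygon that lies outside $Q$, observing that this preserves the inclusion $Q\subseteq P_v\subseteq P$ and that the process terminates. The only (cosmetic) difference is the termination invariant — you track $\abs{P\cap M}$, which drops by exactly one per step, whereas the paper tracks the normalised volume $\Vol{P}$; both are decreasing nonnegative integers, so either works.
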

\begin{proof}
Let $Q\subset B_m$. If $Q\not=B_m$, then we can certainly shave a vertex $v$ of $B_m$ to obtain a polygon $P_v$ that contains $Q$. Continuing inductively, suppose $P\subset B_m$ is a polygon with $Q\subset P$. If $P\ne Q$ then there exists some vertex $v\in\V{P}$ such that $v\notin Q$ and $Q\subseteq P':=P_v$. But
$$\Vol{Q}\le\Vol{P'}<\Vol{P}.$$
Since these normalised volumes are integers, this process must terminate after finitely many steps.
\end{proof}

\begin{rem}
The normalised volume $\Vol{P}$ of a lattice polygon $P$ is twice the Euclidean volume $\mathrm{vol}(P)$; when $P$ is a lattice polygon, $\mathrm{vol}(P)\in(1/2)\Z$.
\end{rem}

It is clear how this lemma presents an algorithm for classifying small polygons. Choose a box $B_m$ and initialise a sequence containing that box as a polygon:
\vspace{0.25em}
\begin{verbatim}
    Ps:=[Polytope([[0,0],[0,m],[m,0],[m,m]])];
\end{verbatim}
\vspace{0.25em}
The algorithm now extends {\tt Ps} recursively by shaving the vertices from each polygon in the sequence, checking for equivalence before appending each new polygon.
\vspace{0.25em}
\begin{verbatim}
    idx:=1;
    while idx le #Ps do
        for v in Vertices(Ps[idx]) do
            P:=Polytope(Exclude(Points(Ps[idx]),v));
            if Dimension(P) eq 2 and not &or[IsEquivalent(P,Q) : Q in Ps] then
                Append(~Ps,P);
            end if;
        end for;
        idx +:= 1;
    end while;
\end{verbatim}
\vspace{0.25em}
The appeal to {\tt IsEquivalent} is simply checking that the new polygon $P$ is not equivalent to a polygon already contained in the sequence. This is the basic algorithm (after checking simple invariants, attempt to build an isomorphism working up from low-valency vertices), which works well for small values of $m$; larger boxes require some straightforward optimisations that we do not describe.

\section{Log del Pezzo codes}\label{sec:ldp}
A polygon $P\subset N\otimes_\Z\Q$ is said to be an \emph{LDP-polygon} if it contains the origin strictly in its interior, and if each vertex of $P$ is a primitive lattice point in $N:=\Hom{M,\Z}$. In this case the corresponding toric variety is a log del Pezzo surface: it has only cyclic quotient singularities and $-K_X$ is ample. For example, the polygon $\sconv{(1,0),(0,1),(-1,-1)}$ is an LDP-polygon corresponding to the projective plane $\Proj^2$. For such $P$, the dual polygon $P^*$ is defined by
$$P^*:=\left\{v\in M_\Q\mid\ip{v,u}\ge-1\text{ for all }u\in P\right\}.$$
Typically $P^*$ is not a \emph{lattice} polygon, but there exists a smallest integer $\ell\ge 1$ for which the vertices of $\ell P^*$ are lattice points. This integer $\ell$ is called the \emph{index} of $P$. It equals the Gorenstein index of the corresponding log del Pezzo surface.

There are infinitely many LDP-polygons, but finitely many for any fixed index $\ell$. Figure~\ref{fig:523} shows an LDP-polygon with vertices $(-1,2)$, $(1,2)$, $(1,0)$, $(-1,-1)$, $(-2,-1)$. It has index $\ell=10$; the dual is a lattice polygon after dilating by a factor of $10$.

\begin{figure}[htbp]
\centering
\includegraphics[scale=1.1]{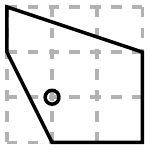}
\qquad
\includegraphics[scale=1.1]{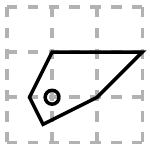}
\caption{An LDP-polygon of index $\ell=10$, with its dual rational polygon.}
\label{fig:523}
\end{figure}

The classification of LDP-polygons of indices $1\le \ell\le 17$ is given in~\cite{KKN08} and available online at~\cite{GRDb}; the example in Figure~\ref{fig:523} is No.~523 in the classification. To use these for toric codes, we allow for translation, and must find equivalent polygons that lie in a box $B_m$, rather than containing the origin as given. The representative polygons in the database are not necessarily the smallest, so we describe an algorithm to determine, for given polygon $P$, the smallest $m$ for which there is a polygon $P'\subset B_m$ equivalent to $P$. (We treat LDP-polygons simply as a source of interesting examples to generate codes, and therefore regard $P$ as lying in $M_\Q$, rather than in terms of algebraic geometry, where they should be viewed as lying in $N_\Q$. Our work here is entirely in terms of $M_\Q$, so no confusion arises from the name.)

\begin{lem}\label{lem:minimum_box}
There is a constructive algorithm that, for any given polygon $P\subset M_\Q$, determines the smallest $m$ for which there is a polygon $P'\subset B_m$ equivalent to $P$, without enumerating any other polygons.
\end{lem}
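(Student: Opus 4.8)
The plan is to reduce the problem to a finite computation by controlling the set of lattice bases $\varphi$ that one needs to test. The key observation is that an equivalence $\varphi(P-u)=P'$ with $P'\subset B_m$ consists of a linear part $\varphi\in\mathrm{GL}_2(\Z)$ and a translation part $u$; once $\varphi$ is fixed, the translation $u$ (if it exists) is essentially forced, since $P'$ must sit in the \emph{smallest} axis-aligned box, which means $P'$ touches both the line $x=0$ and the line $y=0$ (and dually $x=m$, $y=m$). So the genuine search is over $\varphi$, and the value of $m$ realised by a given $\varphi$ is $\max\{\widthu{e_1}{\varphi(P)},\widthu{e_2}{\varphi(P)}\}$, the larger of the lattice widths of $\varphi(P)$ in the two coordinate directions. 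Thus we wish to compute
\[
m(P)=\min_{\varphi\in\mathrm{GL}_2(\Z)}\max\bigl(\widthu{e_1}{\varphi(P)},\widthu{e_2}{\varphi(P)}\bigr),
\]
and the point of the lemma is that this minimum is attained on an explicitly bounded, hence finite, set of $\varphi$.

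First I would recall the classical lattice-width (flatness) algorithm: the width of $P$ in a primitive direction $w\in N$ is $\widthu{w}{P}=\max_{x\in P}\ip{w,x}-\min_{x\in P}\ip{w,x}$, and one can find a direction $w_0$ minimising this — the \emph{lattice width} $\mathrm{wd}(P)$ — by a finite search, because any primitive $w$ with $\widthu{w}{P}\le\mathrm{wd}(P)+C$ for a fixed constant lies in a bounded region (the polar body of $P$ scaled appropriately). The adaptation needed here is that we do not want a single thin direction but a \emph{pair} of directions forming a lattice basis, minimising the \emph{maximum} of the two widths. Second, I would show that in any optimal $\varphi$ one of the two basis directions can be taken to be a width-minimising direction $w_0$: indeed $m(P)\ge\mathrm{wd}(P)$ trivially, and if $w_0$ realises $\mathrm{wd}(P)$ then completing $w_0$ to a basis $(w_0,w_1)$ shows $m(P)\le\max(\mathrm{wd}(P),\min_{w_1}\widthu{w_1}{P})$ where $w_1$ ranges over completions of $w_0$ to a lattice basis; one checks that it is never advantageous to use two directions both of width strictly exceeding $\mathrm{wd}(P)$, since replacing the wider one by $w_0$ cannot increase the maximum. (A short case analysis handles the boundary situation where several directions tie for the minimal width.)

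Third, having fixed the first basis vector to be one of the finitely many minimal-width directions $w_0$, the second basis vector $w_1$ is only well-defined modulo $w_0$, i.e.\ $w_1\mapsto w_1+kw_0$ for $k\in\Z$; but $\widthu{w_1+kw_0}{P}$ is a convex function of $k$ that tends to infinity as $\abs{k}\to\infty$, so its minimum over $\Z$ is attained at one of at most two consecutive integers, found by evaluating the support function of $P$. This makes the whole search finite and effective: enumerate the (finitely many) width-minimising directions $w_0$, for each one enumerate the (at most two) best completions $w_1$, read off $\max(\widthu{w_0}{P},\widthu{w_1}{P})$, take the minimum, and finally translate $P$ so that it sits in $[0,m]\times[0,m]$. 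Crucially no polygon other than $P$ itself (and its images under the finitely many candidate $\varphi$) is ever enumerated.

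The main obstacle is the second step — proving that an optimal equivalence may be assumed to use a width-minimising direction as one of its basis vectors, and more generally pinning down the finite candidate set cleanly. The subtlety is that minimising $\max(a,b)$ over a lattice of $(a,b)$-pairs is not the same as minimising $a$ then minimising $b$: one must rule out the possibility that a slightly-wider-than-optimal first direction admits a much narrower complement. Establishing the needed inequality rigorously, together with handling the degenerate ties among minimal directions (which do occur for symmetric polygons), is where the real work lies; the rest is bookkeeping on top of the standard flatness/width machinery.
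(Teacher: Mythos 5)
Your route is genuinely different from the paper's, and — modulo one gap — it is a valid alternative. The paper does \emph{not} reduce to a width-minimising direction at all: after translating $P$ so that its vertex centroid is at the origin, it inscribes a disc $\Circle\subset P$ and observes that for any dual vector $u$, $\widthu{u}{P}\ge\widthu{u}{\Circle}=d\,\vert u\vert$ with $d$ fixed; hence every primitive $u\in N$ with $\widthu{u}{P}\le w_1$ lies in an explicit Euclidean disc $D$ of radius $w_1/d$ in $N_\Q$. The finite search is then simply over all dual bases $\mathcal{B}\subset D$, and no ``which basis vector is shortest'' reasoning is needed. Your approach instead uses a Minkowski-reduction structure: take $w_0$ a width-minimising direction, complete it to a basis, and argue the optimum is $\max(\mathrm{wd}(P),\,\min_{w_1}\widthu{w_1}{P})$. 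This is a finer analysis, and would give a smaller candidate set, but requires exactly the lemma you flag as the main obstacle.

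That obstacle is real, and your proposed justification of it is wrong as stated. You argue that ``replacing the wider [basis vector] by $w_0$ cannot increase the maximum,'' but if $(a,b)$ is a basis and you replace $b$ by $w_0$, the pair $(a,w_0)$ is in general \emph{not} a basis of $N$ (its determinant can have absolute value greater than one), so the conclusion does not follow. The correct argument is the standard successive-minima one for a norm on a rank-two lattice (here the norm $u\mapsto\widthu{u}{P}$, whose unit ball is the polar of the difference body $P-P$): in any basis $(a,b)$ at most one of $a,b$ is proportional to $w_0$, so at least one is linearly independent of $w_0$ and therefore has norm at least the second successive minimum $\lambda_2$; hence $\max(\widthu{a}{P},\widthu{b}{P})\ge\lambda_2$. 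Conversely, in dimension two a vector achieving $\lambda_2$ forms a basis with $w_0$, so the bound is attained. With this replacing your ``replacement'' argument, your second step is correct, and the rest (convexity of $k\mapsto\widthu{w_1+kw_0}{P}$, finitely many width-minimising $w_0$) is fine. The paper's inscribed-disc bound is cruder but sidesteps this lemma entirely, which is presumably why it is the route taken there.
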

\begin{proof}
We use a modification of the standard minimal width algorithm. Begin by considering $w_i:=\widthu{e^*_i}{P}$ for each dual basis vector $e^*_i\in N:=\Hom{M,\Z}$. If $w_i\le m$ for all $i$, then, after possible translation, $P$ is contained in $B_m$.

Suppose without loss of generality that $w_1>m$. Translate $P$ so that the (rational) point
$$\frac{1}{\abs{\V{P}}} \sum_{v\in\V{P}}v \in P^{\hbox{\,int}}$$
is at the origin. Let $\Circle\subset P$ be the largest disc, centred at the origin, that is contained in $P$. Then $d:= \widthu{e^*_1}{\Circle}$ satisfies $0< d \le w_1$. In the dual space $N_\Q$, consider the disc $D$ centred at the origin of radius $w_1/d$. We claim that $\widthu{u}{P} > w_1$ for any primitive lattice point $u\in N\setminus D$. Indeed
$$\widthu{u}{P}\ge\widthu{u}{\Circle} > \widthu{u'}{\Circle} = \frac{w_1}d\cdot d = w_1,$$
where $u'$ is the unique rational point lying on the boundary of $D$ in the direction of $u$. Thus $P$ is equivalent to some $P'\subset B_m$ if and only if there exists a dual basis $\mathcal{B}_m$ contained in $D$ such that $\widthu{v}{P}\le m$ for each $v\in\mathcal{B}_m$, and this is a finite search.
\end{proof}

Applying this to the LDP-polygons, we find they distribute over $m$ as in Table~\ref{tab:m}; we have cut the table short, but it could be continued to list all 15,346 polygons, with maximum $m=68$.

\begin{table}[htdp]
\caption{The number of LDP-polygons (up to equivalence) of index $\ell\le 17$ contained in the box $[0,m]\times[0,m]$, but not contained in a smaller box, for small values of $m$.}
\centering
\begin{tabular}{|r|ccccccccc|}\hline
$m$&$2$&$3$&$4$&$5$&$6$&$7$&$8$&$9$&$10$\\\hline
\#&$11$&$62$&$364$&$591$&$1125$&$777$&$1277$&$904$&$1187$\\\hline
\end{tabular}
\label{tab:m}
\end{table}

\section{The computations and champion polygons}\label{sec:champion}
We use the computer algebra system {\magma}~\cite{Magma}, and in particular its convex polytope~\cite{ConvChap} and linear codes packages. The polygons are harvested from the Graded Ring Database~\cite{GRDb}, which can be queried from within {\magma} via an XML interface. Since the number of calculations are rather large, they were run in parallel on a cluster of $144$ processor cores at Imperial College, London.

The LDP-polygon number $75$ in the Graded Ring Database gives a code over $\F_7$ that has length $36$, dimension $19$ and minimum distance $12$ (see Figure~\ref{fig:75}; this corresponds to a unique LDP-polygon, since the only point that could serve as an origin is $(2,2)$). The previous best known minimum distance was $11$, according to Grassl's online database of linear codes~\cite{Grassl} (also accessible from within {\magma}); the theoretical maximum is 15.

\begin{figure}[htbp]
\centering
\includegraphics[scale=1.1]{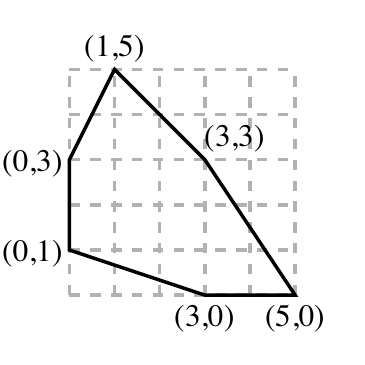}
\caption{The translated LDP-polygon with database id $75$.}
\label{fig:75}
\end{figure}

A curiosity about polygons falls out of the classification. Among those having the maximal number of vertices for their box size, there is a unique one of minimal volume that is homogeneous (by which we mean that all vertices are locally isomorphic to one another). We sketch the polygons with maximal number of vertices for $m\le 7$ in Figure~\ref{fig:max}; notice that for $m=7$ there are two polygons, but only the first is homogeneous.

\begin{figure}[htbp]
\centering
\includegraphics[scale=1.1]{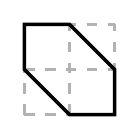}
\includegraphics[scale=1.1]{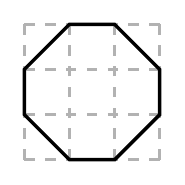}
\includegraphics[scale=1.1]{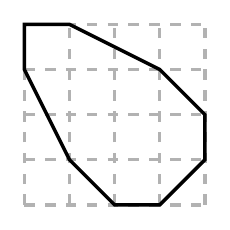}
\includegraphics[scale=1.1]{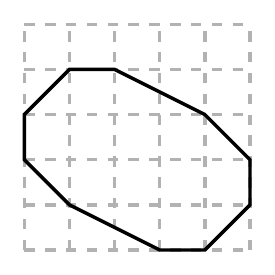}
\includegraphics[scale=1.1]{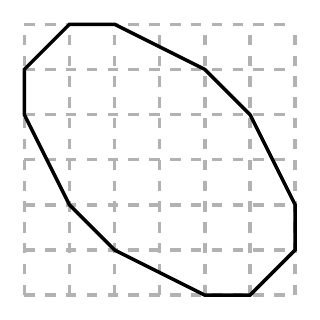}
\includegraphics[scale=1.1]{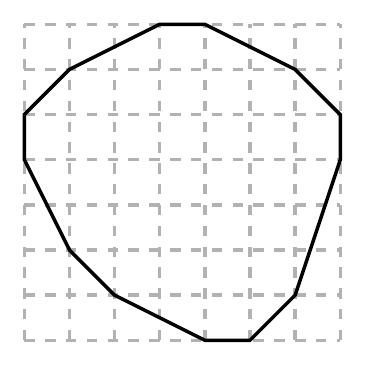}
\includegraphics[scale=1.1]{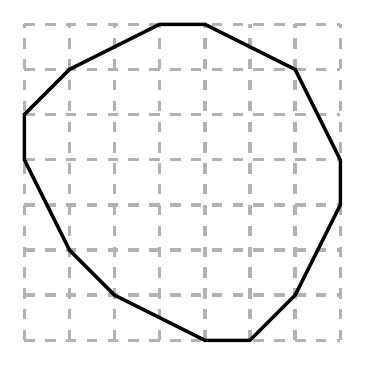}
\caption{The polygons with maximal number of vertices for each box size which have the smallest volume.}
\label{fig:max}
\end{figure}

We reinterpret this geometrically as the following statement, which holds for box size up to seven.
\begin{conj}
Fix $m\in\N$. Consider all toric surfaces $S$ with $-K_S$ ample (but no conditions on the singularities) for which the anticanonical polygon of monomials in $H^0(S,-K_S)$ lies in an $m\times m$ box. Among such $S$, consider those with maximum Picard rank. Then among these
there is a unique surface $S$ which is both smooth and has minimum anticanonical degree.
\end{conj}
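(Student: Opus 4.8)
The plan is to prove the equivalent combinatorial statement that falls out of our classification --- \emph{for each $m$, among the lattice polygons with smallest bounding box $B_m$ and the maximal possible number of vertices, exactly one polygon of minimal normalised volume is homogeneous} (all of whose vertices are $\mathrm{GL}_2(\Z)$-equivalent) --- and to recover the geometric formulation from it. The dictionary is the usual one: to a polygon $P$ one associates the toric surface $S_P$ whose fan is the normal fan of $P$, with $\#\V{P}=\mathrm{rk}\,\mathrm{Pic}(S_P)+2$, with $-K_{S_P}$ represented (when $P$ is reflexive) by $P$ itself and with $(-K_{S_P})^2=\Vol{P}$, and with $S_P$ smooth exactly when $P$ is Delzant; ``homogeneous'' at the polygon level is ``all local toric singularities coincide'' on the surface. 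One small remark worth recording is that the relevant anticanonical data always has the origin as its only interior lattice point --- if $u\in M$ is interior to the anticanonical polytope then $\ip{u,v_\rho}\ge 0$ for every ray generator $v_\rho$, and the $v_\rho$ positively span $N$, forcing $u=0$ --- which both constrains the surfaces occurring and explains why the combinatorial problem is the natural one to attack directly.

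\textbf{Main steps.} Fix $m$. First I would determine the number $V(m)$ of vertices of an extremal polygon and describe the extremal polygons themselves: by the shaving lemma of Section~\ref{sec:smallpolygons} every polygon in $B_m$ is reached from $B_m$ by successively shaving corners, so the extremal ones are those from which no further corner can be shaved without either leaving a smaller box or losing a vertex, and I would pin their shape down by combining an Andrews/Scott-type upper bound on the number of vertices of a lattice polygon inside a box with the observation that an extremal polygon can afford no collinear boundary lattice points. Second, among the $V(m)$-vertex polygons I would single out those of least normalised volume --- these arise by cutting off the largest admissible corner triangles, and one must show there are very few of them (one, or, as first happens at $m=7$, two). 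Third, for each such minimal-volume extremal polygon one decides homogeneity: the local datum at a vertex is the $\mathrm{GL}_2(\Z)$-class of the pair of primitive edge vectors meeting there, so homogeneity forces a cyclic symmetry of the polygon, and the final step is to prove that precisely one candidate possesses such a symmetry --- and that this one is Delzant, which is what yields the smooth minimum-degree surface in the geometric statement. For $m\le 7$ all three steps are the finite computation already performed with the methods of Sections~\ref{sec:smallpolygons} and~\ref{sec:champion}.

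\textbf{The main obstacle.} The difficulty is entirely in doing this uniformly in $m$. The extremal polygons grow and change shape with no evident closed form, so one needs an explicit $m$-indexed family (extrapolating the pattern in Figure~\ref{fig:max}) together with a proof that it is extremal; and, since the minimal-volume extremal polygon is already non-unique at $m=7$, the homogeneity condition is doing genuine work and one must rule out every competitor of the same vertex number and volume --- which amounts to controlling exactly which corner-shavings of $B_m$ preserve the cyclic symmetry. I expect the crux to be the combined existence-and-uniqueness statement: \emph{up to equivalence there is, for every $m$, exactly one lattice polygon of bounding box $B_m$ with maximal vertex number, minimal volume, and homogeneous, and it is Delzant.} I do not see how to establish this beyond the verified range, and would treat it as the genuinely open heart of the conjecture.
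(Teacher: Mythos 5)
The statement you are asked to prove is stated in the paper as a \emph{conjecture}, and the paper contains no proof of it: the only evidence offered is the exhaustive computer classification of polygons in boxes of side $m\le 7$ (Table~\ref{tab:summary}, Figure~\ref{fig:max}), and the text explicitly says the statement ``holds for box size up to seven.'' Your proposal reproduces the paper's intended dictionary --- max-vertex lattice polygons in $B_m$ correspond to maximal Picard rank, normalised volume to anticanonical degree, the ``homogeneous'' minimal-volume polygon to the distinguished surface --- and your three-step plan for a fixed small $m$ is essentially the finite computation the authors performed. So up to the verified range you and the paper are doing the same thing.

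As a proof, however, your attempt has a genuine gap, and it is exactly the one you flag yourself: there is no uniform-in-$m$ description of the extremal (max-vertex) polygons, no control of which of them attain the minimal volume (already non-unique at $m=7$), and no argument that precisely one of these is homogeneous and Delzant. Nothing in the paper supplies this either, so you should not expect to recover a proof from it; the honest conclusion is that the general case remains open. Two smaller points to watch if you pursue the programme. First, the conjecture places no reflexivity assumption on $S$ beyond $-K_S$ ample, and the polygon in question is the polygon of monomials of $H^0(S,-K_S)$, i.e.\ the convex hull of the lattice points of a possibly non-lattice anticanonical polytope; your dictionary (vertices of $P$ versus rays of the fan, $\Vol{P}=(-K)^2$) needs care in this generality, since the hull of lattice points can have a different vertex structure from the rational polytope whose normal fan is the fan of $S$. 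Second, ``homogeneous'' (all vertices locally isomorphic) is weaker than smooth: the paper's geometric statement asserts smoothness of the distinguished surface, so your final step --- showing the unique homogeneous candidate is Delzant --- is doing real work and cannot be absorbed into the definition of homogeneity.
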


\section{Higher prime powers}

We cannot calculate minimum distances, or indeed champions, for higher prime powers: for many polygons with the potential for a large minimum distance, the computation time required to prove this is in the order of millions of years. For specific polygons there exist results on the minimum distance for general prime powers $q$~\cite{LS07,YZ09}, and more can be said when the polygon is Minkowski decomposable~\cite{LS06}, however we do not pursue these techniques here.

Instead, we begin to estimate minimum distances using linear combinations of four rows of the generator matrix. Recall from the introduction that amongst the 1,249,439 toric codes generated
by polygons in boxes of size up to $m=7$, all but six are either shown not to be an equal champion, or to have a prohibitively short vector witnessed by some linear combination of rows involving at most only four rows.

We work with the next two possible values $q=11$ and $13$, with box sizes $8$--$9$ and $10$--$11$ respectively. In each case, we find all LDP-polygons from the database (of index $\le 17$) whose minimum box has size $m$ for which $q$ is minimal satisfying $q-2\ge m$. In other words, each known LDP-polygon will be considered in a minimum box and with respect to its minimum~$q$.

We partition these LDP-polygons by their number of points, $k$. For each $k$, we work through the polygons and find the length of the shortest vector that is a linear combination of up to four rows of the generator matrix, obtaining an upper bound $\dtemp$ on the best minimum distance that could be achieved. These bounds are listed in Tables~\ref{tab:numberq11}--\ref{tab:numberq13}.

\begin{rem}
In theory the bound $\dtemp$ depends on the choice of embedding of $P$ in the box $B_m$; some embeddings may give sharper bounds than others. Lemma~\ref{lem:minimum_box} can generate all possible embeddings in $B_m$, and experimentation suggests that for practical purposes $\dtemp$ is  insensitive to the choices made.
\end{rem}

\begin{table}[htbp]
\caption{The number of LDP-polygonal codes with (minimal) $q=11$. The first row is the dimension $k$ of the code; the second, the length of a known vector, and so an upper bound for the largest minimum distance; the third, the number of polygons (up to equivalence) having such a vector achieving this bound.}\label{tab:numberq11}
\centering
\begin{tabular}{r|cccccccccccccccccc}
$k$&10&11&12&13&14&15&16&17&18&19&20&21&22&23&24&25&26&27\\
$\dtemp$&30&30&30&30&30&30&30&30&30&30&30&40&40&40&30&40&40&30\\
\#&1&1&2&2&3&5&5&4&5&6&6&2&1&1&6&2&1&11\\
\hline
$k$&28&29&30&31&32&33&34&35&36&37&38&39&40&41&42&43&44&45\\
$\dtemp$& 40 & 36&40&40&40&40&40&40&40&40&38&35&35&30&32&39&36&30\\
\# & 1 & 1 &2&5&4&1&2&2&3&2&1&2&1&11&1&1&1&2\\
\hline
$k$&46&47&48&49&50&51&52&53&54&55&56&57&58&59&60&61&62&63\\
$\dtemp$&30&27&27&20&27&27&25&25&18&16&16&12&10&--&--&12&--&8\\
\#&2&4&2&6&1&2&1&1&1&2&1&2&1&--&--&2&--&1\\
\end{tabular}
\end{table}

\begin{table}[htbp]
\caption{The number of LDP-polygonal codes with (minimal) $q=13$; rows as in Table~\ref{tab:numberq11}.}\label{tab:numberq13}
\centering
\begin{tabular}{r|ccccccccccccccccc}
$k$&12&13&14&15&16&17&18&19&20&21&22&23&24&25&26&27&28\\
$\dtemp$ &36&36&36&36&36&36&36&36&36&36&36&36&36&48&36&36&36\\
\# & 1&1&2&2&3&3&4&4&5&4&5&3&3&1&2&1&3\\
\hline
$k$&29&30&31&32&33&34&35&36&37&38&39&40&41&42&43&44&45\\
$\dtemp$&40&36&48&48&48&36&36&40&48&48&48&63&63&63&63&51&54\\
\#&1&2&6&2&2&8&2&1&3&2&7&2&2&1&1&1&1\\
\hline
$k$&46&47&48&49&50&51&52&53&54&55&56&57&58&59&60&61&62\\
$\dtemp$&60&56&60&56&56&45&48&48&48&48&48&40&36&36&36&45&33\\
\#&1&1&1&1&2&2&1&1&3&1&2&3&3&5&1&1&1\\
\hline
$k$&63&64&65&66&67&68&69&70&71&72&73&74&75&76&77&78\\
$\dtemp$&42&40&48&36&30&30&22&12&12&12&33&12&24&12&12&12\\
\#&1&1&1&1&1&1&1&6&3&4&1&3&2&1&3&1\\
\end{tabular}
\end{table}

\begin{acknowledge}
Our thanks to John Cannon for practical help with adapting minimal distance algorithms and for providing {\magma} for use on the Imperial College mathematics cluster, to Andy Thomas for technical assistance, and
to the referees for several interesting suggestions. The second author is supported by EPSRC grant EP/I008128/1, and this research was supported in part by EPSRC Mathematics Platform grant EP/I019111/1.
\end{acknowledge}

\bibliographystyle{amsalpha}
\providecommand{\bysame}{\leavevmode\hbox to3em{\hrulefill}\thinspace}
\providecommand{\MR}{\relax\ifhmode\unskip\space\fi MR }
\providecommand{\MRhref}[2]{%
  \href{http://www.ams.org/mathscinet-getitem?mr=#1}{#2}
}
\providecommand{\href}[2]{#2}

\end{document}